\let\oldmarginpar\marginpar
\renewcommand\marginpar[1]{\-\oldmarginpar[\raggedleft\footnotesize #1]%
{\raggedright\footnotesize #1}}
\let\et=\etexdraw
\def\etexdraw{\drawbb\et}
\theoremstyle{plain}
\newtheorem{thm}{Theorem}[section]
\newtheorem{thm*}{Theorem}
\newtheorem{lem}[thm]{Lemma}
\newtheorem{prop}[thm]{Proposition}
\newtheorem{prop*}[thm*]{Proposition}
\newtheorem{cor}[thm]{Corollary}
\theoremstyle{definition}
\newtheorem{ex}[thm]{Example}
\newtheorem{notation}[thm]{Notation}
\theoremstyle{remark}
\DeclareMathOperator{\Ker}{Ker}
\DeclareMathOperator{\Coker}{Coker}
\DeclareMathOperator{\Image}{Im}
\DeclareMathOperator{\Hom}{Hom}
\DeclareMathOperator{\Ann}{ann}
\DeclareMathOperator{\Nil}{Nil}
\DeclareMathOperator{\Matrix}{\mathbf{M}}
\newcommand{\twisted}[1]{F_* #1}
\begin{document}

\title
[Properties and applications of $F$-finite $F$-modules]
{Some properties and applications of $F$-finite $F$-modules}

\author{Mordechai Katzman}
\address{Department of Pure Mathematics,
University of Sheffield, Hicks Building, Sheffield S3 7RH, United Kingdom}
\email{M.Katzman@sheffield.ac.uk}
\thanks{The author gratefully acknowledges support from EPSRC grant EP/G060967/1.}

\subjclass{Primary 13A35, 13D45, 13P99}



\begin{abstract}
M.~Hochster's work in  \cite{Hochster}
has shown that $F$-finite $F$-modules over regular local rings 
have finitely many $F$-submodules.
In this paper we apply this theorem to prove that morphisms of $F$-finite $F$-modules have a particularly simple form and
we also show that there exist finitely many submodules compatible with a given Frobenius near-splitting
thus generalizing a similar result in \cite{Blickle-Bockle} to the case where the base ring is not $F$-finite.
\end{abstract}

\maketitle

\section{Introduction}\label{Section: Introduction}

The purpose of this paper is to describe several applications of finiteness properties of $F$-finite $F$-modules recently discovered by M.~Hochster in
\cite{Hochster}
to the study of Frobenius maps on injective hulls, Frobenius near-splittings and to the nature of morphisms of $F$-finite $F$-modules.

Throughout this paper  $(R, m)$ shall denote a complete regular local ring of prime characteristic $p$.
At the heart of everything in this paper is the Frobenius map
$f: R \rightarrow R$ given by $f(r)=r^p$ for  $r\in R$.
We can use this Frobenius map to define a new $R$-module structure on $R$ given by
$r \cdot s=r^p s$; we denote this $R$-module $\twisted{R}$. We can then use this to define the
\emph{Frobenius functor} from the category of $R$-modules to itself: given an $R$-module $M$ we define
$F_R(M)$ to be $\twisted{R} \otimes_R M$ with $R$-module structure given by
$r (s\otimes m)= rs \otimes m$ for $r,s\in R$ and $m\in M$.
Henceforth we shall abbreviate $F_R$ to $F$ for the sake of readability.

Let $R[\Theta; f]$ be the skew polynomial ring
which is the free $R$-module $\displaystyle \oplus_{i=0}^\infty R \Theta^{i}$
with multiplication  $\Theta r=r^{p}\Theta$ for all $r\in R$.
As in \cite{Katzman1}, $\mathcal{C}$ shall denote the category of $R[\Theta ;f]$-modules which are Artinian as $R$-modules.
For any two such modules $M, N$, we denote the morphisms between them in $\mathcal{C}$ with $\Hom_{R[\Theta; f]}(M,N)$;
thus an element $g\in \Hom_{R[\Theta; f]}(M,N)$ is an $R$-linear map such that $g( \Theta a)=\Theta g(a)$ for all $a\in M$.
The first main result of this paper (Theorem \ref{Theorem: injectivity of Lyubeznik's functor}) shows that under some conditions
on $N$, $\Hom_{R[\Theta; f]}(M,N)$ is a finite set.

An \emph{$F$-module}
(cf.~the seminal paper \cite{Lyubeznik} for an introduction to $F$-modules and their properties)
over the ring $R$ is  an $R$-module $\mathcal{M}$ together with an $R$-module isomorphism
$\theta_\mathcal{M} : \mathcal{M} \rightarrow F(\mathcal{M})$.
This isomorphism $\theta_\mathcal{M}$ is the \emph{structure morphism} of $\mathcal{M}$.

A \emph{morphism of $F$-modules} $\mathcal{M} \rightarrow \mathcal{N}$ is an $R$-linear map $g$ which makes the following diagram commute
\begin{equation*}\label{CD9}
\xymatrix{
\mathcal{M} \ar@{>}[d]_{\theta_\mathcal{M}} \ar@{>}[r]^{g} & \mathcal{N} \ar@{>}[d]^{\theta_\mathcal{N}}  \\
F(\mathcal{M})  \ar@{>}[r]^{F(g)} & F(\mathcal{N})
}
\end{equation*}
where $\theta_\mathcal{M}$ and $\theta_\mathcal{N}$ are the structure isomorphisms of $\mathcal{M}$ and $\mathcal{N}$, respectively.
We denote $\Hom_{\mathcal{F}}(\mathcal{M}, \mathcal{N})$ the $R$-module of all morphism of $F$-modules $\mathcal{M} \rightarrow \mathcal{N}$

Given any finitely generated $R$-module $M$ and $R$-linear map $\beta : M \rightarrow F(M)$ one can obtain an $R$-module
$$\mathcal{M}=\underrightarrow{\lim} \left( M \xrightarrow[]{\beta} F(M) \xrightarrow[]{F(\beta)} F^2(M) \xrightarrow[]{F^2(\beta)} \dots \right) .$$
Since
$$F(\mathcal{M})=\underrightarrow{\lim} \left(F(M) \xrightarrow[]{F(\beta)} F^2(M) \xrightarrow[]{F^2(\beta)} F^3(M) \xrightarrow[]{F^3(\beta)} \dots \right)=\mathcal{M}$$
we obtain an isomorphism $\mathcal{M} \cong F(\mathcal{M})$, and hence $\mathcal{M}$ is an $F$-module.
Any $F$-module which can be constructed as a direct limit as $\mathcal{M}$ above is called an \emph{$F$-finite} $F$-module with \emph{generating morphism} $\beta$.

There is a close connection between $R[\Theta; f]$-modules and $F$-finite $F$-modules given by \emph{Lyubeznik's Functor} from $\mathcal{C}$
to the category of $F$-finite $F$-modules which is defined as follows (see section 4 in \cite{Lyubeznik} for the details of the construction.)
Given an $R[\Theta; f]$-module $M$ one defines the $R$-linear map $\alpha : F(M) \rightarrow M$ by $\alpha( r \otimes m)=r \Theta m$;
an application of Matlis duality then yields an $R$-linear map $\alpha^\vee : M^\vee \rightarrow F(M)^\vee \cong F(M^\vee)$
and one defines
$$\mathcal{H} (M)=
\underrightarrow{\lim} \left( M^\vee \xrightarrow[]{\alpha^\vee} F(M^\vee) \xrightarrow[]{F(\alpha^\vee)} F^2(M^\vee) \xrightarrow[]{F^2(\alpha^\vee)} \dots \right) .$$
Since $M$ is an Artinian $R$-module, $M^\vee$ is finitely generated and $\mathcal{H} (M)$ is an $F$-finite $F$-module
with generating morphism $M^\vee \xrightarrow[]{\alpha^\vee} F(M^\vee)$. This construction is functorial and results in an exact
covariant functor from $\mathcal{C}$ to the category of $F$-finite $F$-modules.

Later in this paper we will need the following related constructions.
Following \cite{Katzman1} we shall denote
$\mathcal{D}$ the category of all $R$-linear maps $M \rightarrow F(M)$ where $M$ is any
finitely generated $R$-module,  and where a morphism between $M\xrightarrow[]{a} F(M)$ and
$N\xrightarrow[]{b} F(N)$ is a commutative diagram of $R$-linear maps
\begin{equation*}
\xymatrix{
M \ar@{>}[d]^{a} \ar@{>}[r]^{\mu} & N \ar@{>}[d]^{b}\\
F(M) \ar@{>}[r]^{F(\mu)} & F(N)\\
}
\end{equation*}
Section 3 of \cite{Katzman1}  constructs a pair of functors
$\Delta: \mathcal{C} \rightarrow \mathcal{D}$ and
$\Psi: \mathcal{D} \rightarrow \mathcal{C}$
with the property that for all $L\in \mathcal{C}$,
the $R[\Theta; f]$-module $\Psi \circ \Delta (L)$ is canonically isomorphic to $L$
and for all $D=(B\xrightarrow[]{u} F(B))\in \mathcal{D}$,
$\Delta \circ \Psi (D)$ is canonically isomorphic to $D$.
The functor $\Delta$ amounts to the ``first step'' in the construction of Lyubeznik's functor $\mathcal{H}$:
for $L\in\mathcal{C}$ we define the $R$-linear map $\alpha: F(L) \rightarrow L$ to be the one given above and we
let $\Delta(L)$ to be the map $\alpha^\vee: L^\vee \rightarrow F(L)^\vee\cong F(L^\vee)$
(cf.~section 3 in \cite{Katzman1} for the details of the construction.)

The main result in \cite{Hochster} is the surprising fact that for $F$-finite $F$-modules $\mathcal{M}$ and $\mathcal{N}$,
$\Hom_{\mathcal{F}}(\mathcal{N}, \mathcal{M})$ is a finite set. In section \ref{Section: Morphisms in C} of this
paper we exploit this fact to prove the second main result in this paper (Theorem \ref{Theorem: surjectivity of Lyubeznik's functor})
to show the following.
Let
$\gamma: M \rightarrow F(M)$ and  $\beta: N \rightarrow F(N)$ be generating morphisms for
$\mathcal{M}$ and $\mathcal{N}$.
Given an $R$-linear map $g$ which makes the following diagram commute,
\begin{equation*}
\xymatrix{
N \ar@{>}[d]^{g} \ar@{>}[r]^{\beta}   & F(N) \ar@{>}[d]^{F(g)}  \\
M \ar@{>}[r]^{\gamma}                    & F(M)
}
\end{equation*}
one can extend that diagram to
\begin{equation*}
\xymatrix{
N \ar@{>}[d]^{g} \ar@{>}[r]^{\beta}   & F(N) \ar@{>}[d]^{F(g)} \ar@{>}[r]^{F(\beta)} &  F^2 (N) \ar@{>}[d]^{F^2(g)}  \ar@{>}[r]^{F^2(\beta)} & \dots\\
M \ar@{>}[r]^{\gamma}                 & F(M) \ar@{>}[r]^{F(\gamma)} & F^2(M) \ar@{>}[r]^{F^2(\gamma)}& \dots
}
\end{equation*}
and obtain a map between the direct limits of the horizontal sequences, i.e., an element
in $\Hom_{\mathcal{F}} (\mathcal{N}, \mathcal{M})$.
We prove that all elements in $\Hom_{\mathcal{F}} (\mathcal{N}, \mathcal{M})$
arise in this way (cf.~Theorem \ref{Theorem: surjectivity of Lyubeznik's functor}), thus
morphisms of $F$-finite $F$-modules have a particularly simple form.
This answers a question implicit in \cite[Remark 1.10(b)]{Lyubeznik}.

Finally, in section \ref{Section: Applications to Frobenius splittings} we consider
the module $\Hom_{R} (\twisted{R}^n, R^n)$ of  \emph{near-splittings} of $\twisted{R}^n$.
We establish a correspondence between these near-splittings and Frobenius actions on $E^n$ which enables us to prove the third main result in this paper
(Theorem \ref{Corollary: finitely many submodules}) which asserts that given a near-splitting $\phi$ corresponding to an
injective Frobenius action,
there are finitely many $\twisted{R}$-submodules $V\subseteq \twisted{R}^n$ such that $\phi(V)\subseteq V$.
This generalizes a similar result in \cite{Blickle-Bockle} to the case where $R$ is not $F$-finite.

Our study of Frobenius near-splittings is based on the study of its dual notion, i.e., Frobenius maps on
the injective
hull $E=E_R(R/m)$ of the residue field of $R$.
This injective hull is given explicitly as the module of inverse polynomials $\mathbb{K}[\![ x_1^-, \dots, x_d^- ]\!]$
where $x_1, \dots, x_d$ are minimal generators of the maximal ideal of $R$ (cf.~\cite[\S 12.4]{Brodmann-Sharp}.)
Thus $E$ has a natural $R[T; f]$-module structure
extending $T \lambda x_1^{-\alpha_1} \dots x_1^{-\alpha_d}= \lambda^p x_1^{-p \alpha_1} \dots x_d^{-p \alpha_d}$ for
$\lambda\in \mathbb{K}$ and $\alpha_1, \dots, \alpha_d>0$.
We can further extend this to a natural $R[T; f]$-module structure on $E^n$ given by
$$T \left( \begin{array}{c} a_1 \\ \vdots \\ a_n  \end{array} \right) =  \left( \begin{array}{c} T a_1 \\ \vdots \\ T a_n \end{array} \right) .$$
Throughout this paper $T$ will denote this natural Frobenius map, while $\Theta$ will be uses for general Frobenius maps.

The results of section \ref{Section: Applications to Frobenius splittings} will follow from the fact that there is a dual correspondence between
Frobenius near-splittings and sets of $R[\Theta; f]$-module structures on $E^n$.

\section{Frobenius maps of Artinian modules and their stable submodules}
\label{Section: Frobenius maps of $E^n$ and their stable submodules}

Given an Artinian $R$-module $M$ we can embed $M$ in $E^\alpha$ for some $\alpha\geq 0$ and extend this inclusion to an exact sequence
$$ 0\rightarrow M\rightarrow E^\alpha \xrightarrow[]{A^t} E^\beta \rightarrow \dots$$
where
$A^t\in \Hom_R(E_R^\alpha, E_R^\beta)$.
In our setup Matlis duality gives $\Hom_R(E_R, E_R)\cong R$ and so  $A^t\in \Hom_R(E_R^\alpha, E_R^\beta) \cong \Hom_R(R^\alpha, R^\beta)$ is
a $\beta\times \alpha$ matrix with entries in $R$.
Henceforth in this section we will describe certain properties of Artinian $R$-modules
in terms of their representations as kernels of
matrices with entries in $R$. We shall denote $\Matrix_{\alpha, \beta}$ the set of $\alpha \times \beta$ matrices with entries in $R$
and for any such matrix $A$ we will write $A^{[p]}$ to denote the matrix obtained by raising each of its entries to the $p$th power.

We now explore the duality between $E^\alpha$ with a given $R[\Theta; f]$-module structure and
$R$-linear maps $R^\alpha \rightarrow R^\alpha$ for $\alpha\geq 1$ given by the functors $\Delta$ and $\Psi$ defined in section \ref{Section: Introduction}.
Under this duality the $R[\Theta; f]$-module structure
corresponding to the map $(R^\alpha \rightarrow R^\alpha)\in \mathcal{D}$ given by multiplication by $B\in \Matrix_{\alpha, \alpha}$ is given by
$\Theta=B^t T$ where $T$ is the natural Frobenius map on $E^\alpha$ described in section \ref{Section: Introduction}.

\begin{prop}\label{Proposition: correspondence between Frobenius maps and matrices}
Let $M=\ker A^t\subseteq E^\alpha$ be an  Artinian  $R$-module where $A\in \Matrix_{\alpha, \beta}$.
Let $\mathbf{B} =\left\{ B\in \Matrix_{\alpha, \alpha} \,|\, \Image B A \subseteq \Image A^{[p]} \right\}$.
For any $R[\Theta; f]$-module structure on $M$,
$\Delta(M)$ can be identified with an element in  $\Hom_R(\Coker A, \Coker A^{[p]})$
and thus represented by multiplication by some  $B\in\mathbf{B}$. Conversely,
any such $B$ defines an $R[\Theta; f]$-module structure on $M$ which is given by the restriction to $M$ of the Frobenius map
$\phi: E^\alpha\rightarrow E^\alpha$
defined by $\phi(v)=B^t T(v)$ where $T$ is the natural Frobenius map on $E^\alpha$.
\end{prop}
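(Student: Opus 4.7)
The plan is to establish both halves of the correspondence by invoking the equivalence $\mathcal{C} \simeq \mathcal{D}$ furnished by the functors $\Delta$ and $\Psi$, then reducing the statement to a linear-algebra computation by applying Matlis duality to the given presentation $0 \to M \to E^\alpha \xrightarrow{A^t} E^\beta$.

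First I would dualize this presentation. Using $\Hom_R(E, E) \cong R$, the Matlis dual of multiplication by $A^t$ on $E^\alpha$ is multiplication by $A$ on $R^\alpha$, so Matlis duality produces the exact sequence $R^\beta \xrightarrow{A} R^\alpha \to M^\vee \to 0$, giving $M^\vee \cong \Coker A$. Applying the Frobenius functor, which is right exact and sends multiplication by a matrix $X$ to multiplication by $X^{[p]}$, yields $F(M^\vee) \cong \Coker A^{[p]}$. Hence by the construction of $\Delta$, each $R[\Theta;f]$-structure on $M$ produces an element $\alpha^\vee \in \Hom_R(\Coker A, \Coker A^{[p]})$. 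Since $R^\alpha$ is projective, this homomorphism lifts to a matrix $B \in \Matrix_{\alpha,\alpha}$, and compatibility with the quotient maps forces $\Image(BA) \subseteq \Image A^{[p]}$, i.e., $B \in \mathbf{B}$. This handles the forward direction.

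For the converse, given $B \in \mathbf{B}$ I would first verify that the map $\phi(v) = B^t T(v)$ on $E^\alpha$ preserves $M = \ker A^t$. Writing $BA = A^{[p]} C$ for some $C$, and using $p$-linearity of $T$ to get $(A^t)^{[p]} T(v) = T(A^t v)$, a direct calculation gives $A^t \phi(v) = (BA)^t T(v) = C^t T(A^t v) = 0$ for $v \in M$, so $\phi$ restricts to an $R[\Theta;f]$-structure on $M$. To confirm that this restriction is the structure corresponding under $\Delta$ to the element of $\Hom_R(\Coker A, \Coker A^{[p]})$ represented by $B$, I would invoke functoriality of $\Delta$: the paragraph preceding the proposition identifies $(E^\alpha, B^t T)$ with $(R^\alpha \xrightarrow{B} R^\alpha) \in \mathcal{D}$, and $\Delta$ sends the inclusion $M \hookrightarrow E^\alpha$ to the canonical quotient $R^\alpha \twoheadrightarrow \Coker A$, so the induced map on cokernels is exactly the element represented by $B$.

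The main obstacle is bookkeeping: ensuring that the Matlis dual of $A^t$-multiplication on columns of $E$ really is $A$-multiplication on columns of $R$, and that the Frobenius functor acts entry-wise on matrices over $R$. Once these identifications are in place, the non-uniqueness of the lift $B$ causes no trouble: two lifts $B$ and $B'$ of the same homomorphism satisfy $B - B' = A^{[p]} D$ for some $D$, and the calculation $(B - B')^t T(v) = D^t T(A^t v) = 0$ on $\ker A^t$ shows that the restriction $B^t T|_M$ depends only on the underlying element of $\Hom_R(\Coker A, \Coker A^{[p]})$, as required.
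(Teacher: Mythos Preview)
Your proposal is correct and follows essentially the same route as the paper: dualize the presentation to identify $M^\vee\cong\Coker A$ and $F(M^\vee)\cong\Coker A^{[p]}$, lift the resulting map via projectivity of $R^\alpha$ to obtain $B\in\mathbf{B}$, and for the converse use the $\Delta$/$\Psi$ correspondence. The only difference is one of packaging: where the paper simply invokes $\Psi(g)$ and \cite[Theorem~3.1]{Katzman1} for the converse, you unpack this by directly verifying that $B^tT$ preserves $\ker A^t$ and that the resulting structure matches under $\Delta$, and you add the (harmless) check that the choice of lift $B$ does not affect the restriction---all of which is implicit in the paper's appeal to the functor $\Psi$.
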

\begin{proof}
Matlis duality gives an exact sequence
$ R^\beta \xrightarrow[]{A} R^\alpha \rightarrow M^\vee \rightarrow 0$ hence
$$\Delta(M)\in \Hom_R( M^\vee, F_R(M^\vee))\cong  \Hom_R(\Coker A, \Coker A^{[p]}).$$

Let $\Delta(M)$ be the map $g:\Coker A \rightarrow \Coker A^{[p]}$.

In view of Theorem 3.1 in \cite{Katzman1} we only need to show that
any such $R$-linear map is given by multiplication by an $B\in \mathbf{B}$, and that any such $B$
defines an element in $\Delta(M)$.

Using the freeness of $R^\alpha$, we find a map $g^\prime$ which makes the following  diagram
\begin{equation*}\label{CD1}
\xymatrix{
R^\alpha \ar@{>}[rd]^{g^\prime} \ar@{>}[r]^{q_1}   & R^\alpha/\Image A \ar@{>}[r]^{g}    & R^\alpha/\Image A^{[p]}\\
                                            & R^\alpha \ar@{>}[ur]^{q_2}                                    & \\
}
\end{equation*}
commute, where $q_1$ and $q_2$ are quotient maps.
The map $g^\prime$ is given by multiplication by some $\alpha \times \alpha$ matrix $B\in \mathbf{B}$.
Conversely, any such matrix $B$ defines a map $g$ making the diagram above commute, and $\Psi(g)$ gives a
$R[\Theta; f]$-module structure on $M$ as described in the last part of the proposition.
\end{proof}

\begin{notation}
We shall henceforth describe Artinian $R$-modules with a given $R[\Theta; f]$-module structure in terms of the two matrices
in the statement of Proposition \ref{Proposition: correspondence between Frobenius maps and matrices} and talk about
Artinian $R$-modules $M=\Ker A^t\subseteq E^\alpha$ where $A\in \Matrix_{\alpha,  \beta}$ with $R[\Theta; f]$-module structure
given by $B\in \Matrix_{\alpha,  \alpha}$.
\end{notation}

\section{Morphisms in $\mathcal{C}$}\label{Section: Morphisms in C}

In this section we raise two questions.
The first of these asks when for given $R[\Theta; f]$-modules   $M, N$,  the set $\Hom_{R[\Theta; f]} (M,N)$ is finite;
later in this section we prove that this holds when $N$ has no $\Theta$-torsion.
The following two examples illustrate why this set is not finite in general,
and why it is finite in a special simple case.

\begin{ex}
Let $\mathbb{K}$ be an infinite field of prime characteristic $p$ and let $R=\mathbb{K}[\![ x ]\!]$.
Let $M=\Ann_E xR$ and fix an $R[\Theta; f]$-module structure on $M$ given by $\Theta a=x^p T a$ where $T$ is the standard Frobenius action on $E$.
Note that $\Theta M=0$ and that for all $\lambda\in \mathbb{K}$ the map $\mu_\lambda : M \rightarrow M$ given by multiplication by $\lambda$ is
in  $\Hom_{R[\Theta; f]} (M,M)$, and hence this set is infinite.
\end{ex}

\begin{ex}
Let $I,J\subseteq R$ be ideals, and fix  $u\in (I^{[p]} : I)$ and  $v\in (J^{[p]} : J)$.
Endow $\Ann_E I$ and $\Ann_E J$ with $R[\Theta; f]$-module structures given by $\Theta a = uT a$ and $\Theta b = vT b$ for $a\in \Ann_E I$  and
$b\in \Ann_E J$ where $T$ is the standard Frobenius map on $E$.

If $g: \Ann_E I \rightarrow \Ann_E J$ is $R$-linear, an application of Matlis duality yields $g^\vee : R/J \rightarrow R/I$ and we deduce that
$g$ is given by multiplication by an element in $w\in (I : J)$.
If in addition $g\in \Hom_{R[\Theta; f]} (\Ann_E I, \Ann_E J)$, we must have
$ w u T a = g( \Theta a) = \Theta g(a) = v T w a= v w^p T a$,
for all $a\in \Ann_E I$, hence $(v w^p  - u w) T \Ann_E  I=0$ and
$v w^p  - u w \in I^{[p]}$.
The finiteness of $\Hom_{R[\Theta; f]} (\Ann_E I, \Ann_E J)$ translates in this setting to the finiteness of the set of solutions modulo $I^{[p]}$
for the variable $w$
of the equation above, and it is not clear why this set should be finite.
However, if we simplify to the case where $I=J=0$,
the set of solutions of $v w^p - u w=0$ over the the fraction field of $R$ has at most $p$ elements, and in this case we can deduce that
$\Hom_{R[\Theta; f]} (E, E)$ has also at most $p$ elements.
\end{ex}

As in \cite{Lyubeznik}, for any $R[\Theta; f]$-module $M$ we define the \emph{submodule of nilpotent elements}
to be
$\Nil(M)=\{ a\in M \,|\, \Theta^e a = 0 \text{ for some } e\geq 0\}$.
We recall that when $M$ is an Artinian $R$-module there exists an $\eta\geq 0$ such that
$\Theta^\eta \Nil(M)=0$ (cf.~\cite[Proposition 1.11]{Hartshorne-Speiser} and \cite[Proposition 4.4]{Lyubeznik}.)
We also define $M_{\text{red}}=M/\Nil(M)$ and $M^*=\cap_{e\geq 0} R\Theta^e M$ where $R\Theta^e M$ denotes the $R$-module generated by
$\{\Theta^e a \,|\, a\in M\}$. We also note that when $M$ is an $R[\Theta; f]$-module which is  Artinian as an $R$-module, there exists an $e\geq 0$ such that $M^*=R\Theta^e M$ and also
$( M_{\text{red}} )^*=(M^*)_{\text{red}}$ (cf.~section 4 in \cite{Katzman2}.)

\begin{thm}\label{Theorem: injectivity of Lyubeznik's functor}
Let  $M, N$ be $R[\Theta; f]$-modules and let $\phi\in \Hom_{R[\Theta; f]} (M,N)$.
We have $\mathcal{H}(\Image \phi)=0$ if and only if $\phi(M)\subseteq \Nil(N)$ and, consequently,
if $\Nil(N)=0$, the map $\mathcal{H}: \Hom_{R[\Theta; f]} (M,N) \rightarrow \Hom_{\mathcal{F}_R} (\mathcal{H}(N), \mathcal{H}(M))$
is an injection and $\Hom_{R[\Theta; f]} (M,N)$ is a finite set.
\end{thm}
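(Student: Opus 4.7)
The plan is to reduce both conclusions to a single vanishing criterion for Lyubeznik's functor: for any \(L\in\mathcal{C}\), \(\mathcal{H}(L)=0\) if and only if \(L=\Nil(L)\); equivalently, using the uniform bound \(\Theta^\eta\Nil(L)=0\) already cited in the paper, iff \(\Theta^e L=0\) for some \(e\geq 0\). Given this lemma, the stated equivalence is the special case \(L=\Image\phi\), and the injection plus finiteness corollaries follow by a routine factorization argument combined with Hochster's theorem. The only real work is in the vanishing criterion, and the technical obstacle is a careful identification of the transition maps in \(\mathcal{H}(L)\) with the Matlis duals of iterated Frobenius action maps.

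To prove the criterion, I would unwind the direct limit \(\mathcal{H}(L)=\varinjlim F^e(L^\vee)\). Since \(L^\vee\) is a finitely generated \(R\)-module, \(\mathcal{H}(L)=0\) iff the composite map \(L^\vee \to F^e(L^\vee)\) is zero for some \(e\). Under the natural identification \(F^e(L^\vee)\cong F^e(L)^\vee\), this composite is the Matlis dual of the \(e\)-fold iterate \(\alpha^{(e)}:=\alpha\circ F(\alpha)\circ\cdots\circ F^{e-1}(\alpha)\colon F^e(L)\to L\), which acts on pure tensors by \(r\otimes m \mapsto r\Theta^e m\). Matlis duality is faithful between Artinian and finitely generated modules, so the composite vanishes iff \(\alpha^{(e)}=0\), and taking \(r=1\) this is the same as \(\Theta^e L=0\).

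Now apply the criterion to \(L=\Image\phi\). Because \(\phi\) commutes with \(\Theta\), \(\Image\phi\) is an \(R[\Theta;f]\)-submodule of \(N\); as a submodule of the Artinian module \(N\) it lies in \(\mathcal{C}\). An element \(\phi(m)\) satisfies \(\Theta^e\phi(m)=\phi(\Theta^e m)=0\) in \(\Image\phi\) iff it does so in \(N\), so \(\Image\phi=\Nil(\Image\phi)\) is the same as \(\phi(M)\subseteq\Nil(N)\), which yields the iff. For the corollaries, factor \(\phi\) as \(M\twoheadrightarrow\Image\phi\hookrightarrow N\) and apply the (arrow-reversing) exactness of \(\mathcal{H}\) to the two associated short exact sequences; this produces \(\mathcal{H}(N)\twoheadrightarrow\mathcal{H}(\Image\phi)\hookrightarrow\mathcal{H}(M)\) whose composite equals \(\mathcal{H}(\phi)\). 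If \(\Nil(N)=0\) and \(\mathcal{H}(\phi)=0\), surjectivity of the first arrow forces the second to vanish, and injectivity of the latter then gives \(\mathcal{H}(\Image\phi)=0\); the iff now yields \(\phi(M)\subseteq\Nil(N)=0\) and \(\phi=0\), proving injectivity of \(\mathcal{H}\) on Hom-sets. Finiteness of \(\Hom_{R[\Theta;f]}(M,N)\) is then immediate from Hochster's theorem applied to \(\Hom_{\mathcal{F}_R}(\mathcal{H}(N),\mathcal{H}(M))\).
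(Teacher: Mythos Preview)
Your proof is correct and follows essentially the same architecture as the paper: factor $\phi$ through $\Image\phi$, apply the (contravariant) exact functor $\mathcal{H}$ to obtain $\mathcal{H}(N)\twoheadrightarrow\mathcal{H}(\Image\phi)\hookrightarrow\mathcal{H}(M)$, reduce to the vanishing criterion $\mathcal{H}(\Image\phi)=0\Leftrightarrow\Image\phi\subseteq\Nil(N)$, and invoke Hochster's finiteness theorem for the final statement.

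The only noteworthy difference is how the vanishing criterion is obtained. The paper cites \cite[Theorem 4.2]{Lyubeznik}, which characterizes $\mathcal{H}(L)=0$ in terms of $(L^{*})_{\mathrm{red}}=0$, and then unwinds this via the stabilization $L^{*}=R\Theta^{e}L$ together with the Hartshorne--Speiser bound on $\Nil(N)$. You instead prove the criterion directly from the direct-limit definition of $\mathcal{H}(L)$, identifying the transition map $L^{\vee}\to F^{e}(L^{\vee})$ as the Matlis dual of the iterated Frobenius action $\alpha^{(e)}$ and using faithfulness of Matlis duality on Artinian modules. Your route is slightly more self-contained (it avoids the $(\cdot)^{*}$ apparatus and the external citation), while the paper's route makes the connection to Lyubeznik's structural theory of $F$-finite $F$-modules more explicit; both arrive at the same place with comparable effort.
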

\begin{proof}

We apply $\mathcal{H}$ to the commutative diagram
\begin{equation*}\label{CD7}
\xymatrix{
M \ar@{>>}[d]_{\phi} \ar@{>}[dr]^{\phi} &  \\
\Image \phi \ar@{^{(}->}[r]  & N \\
}
\end{equation*}
to obtain the commutative diagram
\begin{equation*}\label{CD8}
\xymatrix{
\mathcal{H}(N) \ar@{>}[dr]_{\mathcal{H}(\phi)} \ar@{>>}[r] & \mathcal{H}(\Image \phi) \ar@{^{(}->}[d]  \\
& \mathcal{H}(M)  \\
}
.
\end{equation*}
Now $\mathcal{H}(\phi)=0$ if and only if $\mathcal{H}(\Image \phi)=0$, and by \cite[Theorem 4.2]{Lyubeznik} this is equivalent to
$\left(\Image \phi\right)^*_\text{red}=0$.

Choose $\eta\geq 0$ such that $\Theta^\eta \Nil(N)=0$
and choose $e\geq 0$ such that $\left(\Image \phi\right)^*=R\Theta^e \Image \phi$.

Now
\begin{eqnarray*}
\left(\Image \phi\right)^*_\text{red}=0 & \Leftrightarrow & R\Theta^\eta R\Theta^e \phi(M)=0\\
& \Leftrightarrow & R\Theta^{\eta+e}  \phi(M)=0\\
& \Leftrightarrow & \Image \phi \subseteq \Nil(N)
\end{eqnarray*}

The second statement now follows immediately.
\end{proof}

The second main result in this section,  Theorem \ref{Theorem: surjectivity of Lyubeznik's functor}
shows that all morphisms of $F$-finite $F$-modules arise as images of maps of $R[\Theta; f]$-modules under Lyubeznik's functor $\mathcal{H}$.

\begin{thm}\label{Theorem: surjectivity of Lyubeznik's functor}
Let $\mathcal{M}$ and $\mathcal{N}$ be $F$-finite $F$-modules.
For every $\phi \in \Hom_{\mathcal{F}_R} (\mathcal{N}, \mathcal{M})$
there exist generating morphisms $\gamma: M \rightarrow F(M)\in \mathcal{D}$ and  $\beta: N \rightarrow F(N) \in \mathcal{D}$
for $\mathcal{M}$ and $\mathcal{N}$, respectively, and a morphism (in the category $\mathcal{D}$)
\begin{equation*}\label{CD10}
\xymatrix{
N \ar@{>}[d]^{g} \ar@{>}[r]^{\beta}   & F(N) \ar@{>}[d]^{F(g)}  \\
M \ar@{>}[r]^{\gamma}                    & F(M)
}
\end{equation*}
such that $\phi=\mathcal{H} \left(\Psi(g)\right)$, i.e., such that
$\phi$ is the map of direct limits
\begin{equation*}\label{CD10a}
\xymatrix{
N \ar@{>}[d]^{g} \ar@{>}[r]^{\beta}   & F(N) \ar@{>}[d]^{F(g)}   \ar@{>}[r]^{F(\beta)} &  F^2(N) \ar@{>}[d]^{F^2(g)}   \ar@{>}[r]^{F^2(\beta)} & \dots \\
M \ar@{>}[r]^{\gamma}                 & F(M) \ar@{>}[r]^{F(\gamma)}                    &  F^2(M) \ar@{>}[r]^{F^2(\gamma)}                      & \dots
}
\end{equation*}


\end{thm}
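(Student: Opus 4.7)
The plan is to construct $g$ by lifting $\phi \circ \iota_N^0$ to a high-enough stage of a direct-limit presentation of $\mathcal M$, and then enlarging the stage further until the required square commutes. Throughout, fix arbitrary generating morphisms $\gamma_0 : M_0 \to F(M_0)$ for $\mathcal M$ and $\beta : N \to F(N)$ for $\mathcal N$, and write $\iota_M^i : F^i(M_0) \to \mathcal M$ and $\iota_N^i : F^i(N) \to \mathcal N$ for the canonical maps into the direct limits, so that $\iota_M^{i+1} \circ F^i(\gamma_0) = \iota_M^i$ and similarly for $\mathcal N$.

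First I would produce an $R$-linear $g : N \to F^{j}(M_0)$ with $\iota_M^{j} \circ g = \phi \circ \iota_N^0$. Choose a finite free presentation $\pi : R^k \twoheadrightarrow N$ and, for each basis element $e_\ell$ with image $n_\ell \in N$, pick a representative $m_\ell \in F^{j_\ell}(M_0)$ of $\phi(\iota_N^0(n_\ell))$; pushing all representatives forward by transition maps to a common level $j_0$ defines $\tilde g : R^k \to F^{j_0}(M_0)$ with $\iota_M^{j_0} \circ \tilde g = \phi \circ \iota_N^0 \circ \pi$. Since $\Ker \pi$ is finitely generated and $\tilde g(\Ker \pi) \subseteq \Ker \iota_M^{j_0}$, a single further finite iteration of transition maps kills $\tilde g(\Ker \pi)$, so the resulting composite factors through $\pi$ and gives the desired $g : N \to F^{j}(M_0)$. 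Relabelling $M := F^{j}(M_0)$ and $\gamma := F^{j}(\gamma_0)$ yields a new generating morphism for $\mathcal M$ together with the identity $\iota_M^0 \circ g = \phi \circ \iota_N^0$.

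Next, I need to force the square $\gamma \circ g = F(g) \circ \beta$ to commute. Using $\iota_M^1 \circ \gamma = \iota_M^0$, the identity $\iota_N^1 = \theta_{\mathcal N}^{-1} \circ F(\iota_N^0)$, and the $F$-linearity relation $F(\phi) \circ \theta_{\mathcal N} = \theta_{\mathcal M} \circ \phi$, one checks that both $\iota_M^1 \circ \gamma \circ g$ and $\iota_M^1 \circ F(g) \circ \beta$ equal $\phi \circ \iota_N^0$. Hence the difference $\gamma \circ g - F(g) \circ \beta$ maps $N$ into $\Ker \iota_M^1$, and since $N$ is finitely generated a further finite iteration of $\gamma$ annihilates it. Replacing $(M, \gamma, g)$ by $(F^i(M), F^i(\gamma), \gamma^i \circ g)$, where $\gamma^i = F^{i-1}(\gamma) \circ \cdots \circ \gamma : M \to F^i(M)$, the square commutes on the nose, thanks to the functorial identity $F^i(\gamma) \circ \cdots \circ F(\gamma) \circ F(g) = F(\gamma^i \circ g)$; and the compatibility $\iota_M^0 \circ g = \phi \circ \iota_N^0$ persists under this replacement since $\iota_M^i \circ \gamma^i = \iota_M^0$.

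The resulting morphism $\tilde\phi : \mathcal N \to \mathcal M$ of $F$-modules then satisfies $\tilde\phi \circ \iota_N^0 = \phi \circ \iota_N^0$ by construction, and a short induction using $\iota_N^{i+1} = \theta_{\mathcal N}^{-1} \circ F(\iota_N^i)$ together with the commutation of both $\tilde\phi$ and $\phi$ with the structure isomorphisms gives $\tilde\phi \circ \iota_N^i = \phi \circ \iota_N^i$ for every $i$, hence $\tilde\phi = \phi$. The main obstacle throughout is the possible non-injectivity of the canonical maps $\iota_M^i$: every attempt to choose a lift or force a square to commute only succeeds \emph{eventually} in the limit, so one must repeatedly replace $M$ by a later stage. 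That the process terminates at each stage is ensured by the Noetherianity of $R$ via finite generation of both $N$ and $\Ker \pi$.
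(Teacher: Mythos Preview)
Your argument is correct and, interestingly, takes a genuinely different and more elementary route than the paper's.

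The paper proceeds by constructing, for each $j\geq 0$, an auxiliary $F$-module morphism $\psi_j:\mathcal{N}\to\mathcal{M}$ obtained by using $\phi$ on the first $j$ stages of the direct system for $\mathcal{N}$ and then extending via iterated Frobenii of $\phi_j$. It then invokes Hochster's theorem that $\Hom_{\mathcal{F}}(\mathcal{N},\mathcal{M})$ is a \emph{finite set} to force the sequence $(\psi_j)_{j\geq 0}$ to repeat; a repetition $\psi_0=\psi_{i_k}$ is exactly what is needed to conclude that $\phi$ is induced by the single ladder built from $\phi_0$. Thus the paper's proof rests essentially on Hochster's finiteness result.

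Your proof avoids that result entirely. You lift $\phi\circ\iota_N^0$ to a finite stage of the system for $\mathcal{M}$ using only that $N$ and a presenting kernel are finitely generated, then observe that the obstruction $\gamma\circ g - F(g)\circ\beta$ lands in $\Ker\iota_M^1$, and kill it by a further finite transition --- again using only Noetherianity. The final identification $\tilde\phi=\phi$ via the induction on $\iota_N^i$ using $\theta_{\mathcal{N}}\circ\iota_N^{i+1}=F(\iota_N^i)$ is clean and correct. The upshot is that your proof shows the theorem is really an elementary direct-limit statement and does not depend on the deep finiteness of $\Hom_{\mathcal{F}}(\mathcal{N},\mathcal{M})$; the paper's approach, by contrast, situates the result as an application of that finiteness, which fits its broader narrative but is logically heavier than necessary.
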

\begin{proof}

Choose any generating morphisms
$$\mathcal{N}=\underrightarrow{\lim} \left( N \xrightarrow[]{\beta} F(N) \xrightarrow[]{F(\beta)} F^2(N) \xrightarrow[]{F^2(\beta)} \dots \right)$$
and
$$\mathcal{M}=\underrightarrow{\lim} \left( M \xrightarrow[]{\gamma} F(M) \xrightarrow[]{F(\gamma)} F^2(M) \xrightarrow[]{F^2(\gamma)} \dots \right)$$
and fix any $\phi \in \Hom_{\mathcal{F}_R} (\mathcal{N}, \mathcal{M})$.

For all $j\geq 0$ let $\phi_j$ be the restriction of $\phi$ to the image of $F^j(N)$ in $\mathcal{N}$.

The fact that $\phi$ is a morphism of $F$-modules implies that for every $j\geq 0$ we have a commutative diagram
\begin{equation*}\label{CD4}
\xymatrix{
F^{j}(N) \ar@{>}[d]^{} \ar@{>}[r]^{F^{j}(\beta)}   & F^{j+1}(N) \ar@{>}[d]^{}  \\
\mathcal{N} \ar@{>}[d]^{\phi} \ar@{>}[r]^{\theta_\mathcal{N}}   & F(\mathcal{N}) \ar@{>}[d]^{F(\phi)}  \\
\mathcal{M}   \ar@{>}[r]_{\theta_\mathcal{M}}^{\cong}                    & F(\mathcal{M})   \\
}
\end{equation*}
where $\theta_\mathcal{M}$ and $\theta_\mathcal{N}$ are the structure isomorphims of $\mathcal{M}$ and $\mathcal{N}$, respectively, and
where the compositions of the vertical maps are $\phi_j$ and $F(\phi_j)$.
Repeated applications of the Frobenius functor yields a commutative diagram
\begin{equation*}\label{CD5}
\xymatrix{
F^j(N) \ar@{>}[d]^{\phi_j} \ar@{>}[r]^{F^j(\beta)}    & F^{j+1}(N) \ar@{>}[d]^{F(\phi_j)} \ar@{>}[r]^{F^{j+1}(\beta)} & \dots \\
\mathcal{M}   \ar@{>}[r]_{\theta_\mathcal{M}}^{\cong} &  F(\mathcal{M})   \ar@{>}[r]_{F(\theta_\mathcal{M})}^{\cong}  & \dots \\
}
\end{equation*}
and we can now extend this commutative diagram to the left to obtain
\begin{equation*}\label{CD6}
\xymatrix{
N \ar@{>}[ddrrr]^{\phi_0} \ar@{>}[r]^{\beta} & F(N) \ar@{>}[ddrr]^{\phi_1} \ar@{>}[r]^{F(\beta)}& \dots \ar@{>}[r]^{F^{j-1}(\beta)} &  F^{j}(N) \ar@{>}[dd]^{\phi_{j}} \ar@{>}[r]^{F^{j}(\beta)} & F^{j+1}(N) \ar@{>}[d]^{F(\phi_j)} \ar@{>}[r]^{F^{j+1}(\beta)}  & F^{j+2}(N) \ar@{>}[d]^{F^{2}(\phi_j)} \ar@{>}[r]^{F^{j+2}(\beta)} & \dots \\
&&&& F(\mathcal{M}) \ar@{>}[dl]_{\theta_\mathcal{M}^{-1}}  & F^2(\mathcal{M}) \ar@{>}[dll]^{\theta_\mathcal{M}^{-1}\circ F(\theta_\mathcal{M})^{-1}} \quad\dots \\
&&&\mathcal{M} &&\\
}
\end{equation*}
This commutative diagram defines an $R$-linear map $\psi_j: \mathcal{N} \rightarrow \mathcal{M}$.
Furthermore, we show next that this $\psi_j$ is a map of $\mathcal{F}$-modules, i.e.,
that for all $j\geq 0$, $F(\psi_j) \circ \theta_\mathcal{N}= \theta_\mathcal{M} \circ \psi_j$.
Fix $j\geq 0$ and abbreviate $\psi=\psi_j$.

Pick any $a\in \mathcal{N}$ represented as an element of $F^e(N)$.
If $e<j$ then the fact that  $\phi$ is a morphism of $F$-modules implies that
$$\theta_\mathcal{M}\circ \psi(a) = \theta_\mathcal{M} \circ \phi(a) = F(\phi) \circ \theta_\mathcal{N} (a) = F(\psi) \circ \theta_\mathcal{N} (a).$$
Assume now that $e\geq j$; we have \begin{eqnarray*}
\theta_\mathcal{M} \circ \psi (a) & = & \theta_\mathcal{M} \circ \theta_\mathcal{M}^{-1} \circ F(\theta_\mathcal{M}^{-1}) \circ \dots
\circ F^{e-1-j}(\theta_\mathcal{M}^{-1}) \circ F^{e-j}(\phi_j) (a) \\
&=&   F(\theta_\mathcal{M}^{-1}) \circ \dots \circ F^{e-1-j}(\theta_\mathcal{M}^{-1}) \circ F^{e-j}(\phi_j) (a)
\end{eqnarray*}

and
\begin{eqnarray*}
F(\psi) \circ \theta_\mathcal{N}(a) & = &
F\left( \theta_\mathcal{M}^{-1} \circ F(\theta_\mathcal{M}^{-1}) \circ \dots \circ F^{e-1-j}(\theta_\mathcal{M}^{-1}) \circ F^{e-j}(\phi_j) \right)
\left( F^e(\beta) (a) \right) \\
& = & F(\theta_\mathcal{M}^{-1}) \circ  \dots \circ F^{e-1-j}(\theta_\mathcal{M}^{-1})  \circ F^{e-j}(\theta_\mathcal{M}^{-1}) \circ F^{e+1-j}(\phi_j) \left( F^e(\beta) (a) \right) \\
& = & F(\theta_\mathcal{M}^{-1}) \circ  \dots \circ F^{e-1-j}(\theta_\mathcal{M}^{-1})  \circ F^{e-j}(\theta_\mathcal{M}^{-1}) \circ F^{e-j}(\theta_\mathcal{M}) \circ F^{e-j} (\phi_j)(a) \\
& = & F(\theta_\mathcal{M}^{-1}) \circ  \dots \circ F^{e-1-j}(\theta_\mathcal{M}^{-1})   \circ F^{e-j} (\phi_j )(a)
\end{eqnarray*}
where the penultimate inequality follows from the fact that $\phi$ is a morphism of $F$-modules.

Consider now the set $\{ \psi_i \}_{i\geq 0}$; it is a finite set according to Theorem 5.1 in \cite{Hochster}, hence we can
find a sequence $0\leq i_1 < i_2 < \dots$ such that $\psi_{i_1}=\psi_{i_2}=\dots$.
By replacing $\mathcal{N}$ and $\mathcal{M}$ with $F^{i_1} (\mathcal{N})$ and $F^{i_1}(\mathcal{M})$ we may assume that $i_1=0$.

Pick $j\geq 0$ so that $\phi$ maps the image of $N$ in $\mathcal{N}$ into $F^j(M)$. Since $\mathcal{M}\cong F^j(\mathcal{M})$ we may replace $\mathcal{M}$ with $F^j(\mathcal{M})$
and assume that $\phi(\Image N)\subseteq M$ and hence also that
for all $e\geq 0$, $F^e(\phi)$ maps the image of $F^e(N)$ in $\mathcal{N}$ into $F^e(M)$.

Fix now any $e\geq 0$ and pick any $i_k>e$; the fact that $\psi_{0}=\psi_{i_k}$ implies that for all $a\in F^e(N)$,
$F^{e}(\phi_{0})(a)=\psi_{0}(a)=\psi_{i_k}(a)=\phi(a)$ and since this holds for all $e\geq 0$ we deduce that $\phi$ is induced from the commutative
diagram
\begin{equation*}\label{CD3}
\xymatrix{
N \ar@{>}[d]^{\phi_0} \ar@{>}[r]^{\beta}   & F(N) \ar@{>}[d]^{F(\phi_0)} \ar@{>}[r]^{F(\beta)}  & F^2(N) \ar@{>}[d]^{F^2(\phi_0)} \ar@{>}[r]^{F^2(\beta)} & \dots \\
M   \ar@{>}[r]^{\gamma}                    & F(M) \ar@{>}[r]^{F(\gamma)}                        & F^2(M)  \ar@{>}[r]^{F^2(\gamma)}                        & \dots \\
}
\end{equation*}

An application of the functor $\Psi$ to the leftmost square in the commutative diagram above yields a morphism of
$R[\Theta; f]$-modules $g: M \rightarrow N$
and $\phi=\mathcal{H}(g)$.
\end{proof}

\section{Applications to Frobenius splittings}\label{Section: Applications to Frobenius splittings}

For any $R$-module $M$ let $\twisted{M}$ denote the additive Abelian group $M$ with $R$-module structure given by
$r \cdot a = r^p a$ for all $r\in R$ and $a\in M$.
In this section we study the module
$\Hom_{R} (\twisted{R}^n, R^n)$ of  \emph{near-splittings} of $\twisted{R}^n$.
Given such an element $\phi\in \Hom_{R} (\twisted{R}^n, R^n)$ we will describe
the submodules $V\subseteq \twisted{R}^n$ for which $\phi(V)\subseteq V$.
These submodules in the case $n=1$,  known as \emph{$\phi$-compatible ideals}, are of significant importance in algebraic geometry (cf.~\cite{Brion-Kumar} for a study of applications
of Frobenius splittings and their compatible submodules in algebraic geometry.)
We will prove that under some circumstances these form a finite set
and thus generalize a result in \cite{Blickle-Bockle} obtained in the $F$-finite case.

We first exhibit the following easy implication of Matlis duality necessary for the results of this section.
\begin{lem}\label{Lemma: extension of Matlis duality}
For any (not necessarily finitely generated) $R$-module $M$,
$\Hom_R(M,R)\cong \Hom_R(R^\vee, M^\vee)$.
\end{lem}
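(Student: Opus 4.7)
The plan is to identify both $\Hom_R(M,R)$ and $\Hom_R(R^\vee, M^\vee)$ with the same auxiliary object, namely $(E \otimes_R M)^\vee$, via Hom-tensor adjunction, without ever having to manipulate $M$ directly. No finiteness hypothesis on $M$ should be needed, which is good news because Matlis duality on its own does not apply to $M$.

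The one genuine input required is that the base ring is complete: since $(R,m)$ is complete local Noetherian, Matlis duality applied to the finitely generated module $R$ gives the natural isomorphism $R \isomto \Hom_R(E,E) = E^\vee$ (cf.\ \cite[\S 10.2]{Brodmann-Sharp}). Also note that $R^\vee = \Hom_R(R,E) \cong E$.

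With these identifications in place, I would write down the following chain of natural isomorphisms:
\[
\Hom_R(M, R) \;\cong\; \Hom_R\bigl(M, \Hom_R(E,E)\bigr) \;\cong\; \Hom_R(E \otimes_R M, E) \;\cong\; \Hom_R\bigl(E, \Hom_R(M, E)\bigr),
\]
where the first isomorphism uses $R \cong \Hom_R(E,E)$, and the second and third are the standard Hom-tensor adjunction (applied in two different orderings, using symmetry $M \otimes_R E \cong E \otimes_R M$). The final term is exactly $\Hom_R(R^\vee, M^\vee)$ under the identification $R^\vee \cong E$, which completes the proof.

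There is no real obstacle here: Hom-tensor adjunction is valid for arbitrary modules, so completeness of $R$ is the only nontrivial ingredient and enters solely through $R \cong E^\vee$. If a more explicit description of the isomorphism is desired for later use, one can track what happens to a map $f : M \to R$: it is sent to its Matlis dual $f^\vee : R^\vee \to M^\vee$, which is the most natural candidate and agrees with the composition above.
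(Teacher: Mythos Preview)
Your proof is correct. Both your argument and the paper's rest on the single nontrivial input that $R$ is complete, used via the identification $R \cong \Hom_R(E,E)$. The paper, however, carries this out by hand: it writes down explicit mutually inverse maps, sending $\phi \in \Hom_R(M,R)$ to $\phi^\vee$ (precomposition by $\phi$ on $R^\vee$) and sending $\psi \in \Hom_R(R^\vee, M^\vee)$ to the map $\widetilde{\psi}$ determined by $(\widetilde{\psi}(m))(a) = (\psi(h_a))(m)$, and then verifies directly that $\widetilde{\psi}^\vee = \psi$ and $\widetilde{\phi^\vee} = \phi$. Your chain of Hom--tensor adjunctions packages exactly the same computation abstractly; if one unwinds the two adjunction steps one recovers the paper's explicit formulas, as your final remark about $f \mapsto f^\vee$ indicates. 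The paper's version has the minor advantage of displaying the isomorphism in a form immediately usable in the proof of Theorem~\ref{Theorem: Freeness}, while yours is shorter and makes naturality automatic.
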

\begin{proof}
For all $a\in E$ let $h_a\in\Hom_R(R,E)$ denote the map sending $1$ to $a$.

For any $\phi\in \Hom_R(M,R)$, $\phi^\vee\in \Hom_R(R^\vee, M^\vee)$ is defined as
$\left(\phi^\vee(h_a)\right)(m)=\phi(m) a$ for any $m\in M$ and $a\in E$.
For any
$\psi\in \Hom_R(R^\vee, M^\vee)$ we define $\widetilde{\psi}\in \Hom_R(M,R)\cong \Hom_R(M,E^\vee)$
as
$\left(\widetilde{\psi}(m)\right)(a)=\left(\psi(h_a)\right)(m) $ for all $a\in E$ and $m\in M$.
Note that the function $\psi \mapsto \widetilde{\psi}$ is $R$-linear.

Let $\psi\in \Hom_R(R^\vee, M^\vee)$ and fix an $m\in M$.
Note that for all $a\in E$
$$\widetilde{\psi}^\vee(h_a)(m)=\widetilde{\psi}(m) a$$
when we view  $\widetilde{\psi}$ as an element in $\Hom_R(M, R)$.
After we identify $\Hom_R(M, E^\vee)$ with $\Hom_R(M, R)$ we can write
$$\widetilde{\psi}^\vee(h_a)(m)=\widetilde{\psi}(m) (a)= \psi(h_a)(m)$$
thus $\widetilde{\psi}^\vee=\psi$.

It is now enough  to show that for all $\phi\in \Hom_R(M,R)$, $\widetilde{\phi^\vee}=\phi$, and indeed
for all $a\in E$ and $m\in M$
$$\left(\widetilde{\phi^\vee}(m) \right) (a)=\left(\phi^\vee(h_a)\right)(m)=\phi(m) a,$$
i.e., $\left(\widetilde{\phi^\vee}(m)\right)\in \Hom_R(E,E)$ is given by multiplication by $\phi(m)$
and so under the  identification of  $\Hom_R(E,E)$ with $R$, $\widetilde{\phi^\vee}$ is identified with $\phi$.
\end{proof}

We can now prove a generalization Lemma 1.6 in \cite{Fedder} in the form of the next two theorems.

\begin{thm}\label{Theorem: Freeness}
\begin{enumerate}
\item[(a)] The $\twisted{R}$-module $\Hom_R\left(\twisted{R} , E\right)$ is injective of the form
$\oplus_{\gamma\in\Gamma} \twisted{E} \oplus H$ where
$\Gamma$ is non-empty, $H=\bigoplus_{\lambda\in\Lambda} \twisted{E(R/P_\lambda)}$,
$\Lambda$ is a (possibly empty) set, $P_\lambda$ is a non-maximal prime ideal of $R$ for all $\lambda\in\Lambda$ and
$E(R/P_\lambda)$ denotes the injective hull of $R/P_\lambda$.

\item[(b)] Write $\mathcal{B}=  \Hom_{\twisted{R}}\left(E, \oplus_{\gamma\in\Gamma} \twisted{E}\right)\subseteq
\prod_{\gamma\in\Gamma} \Hom_{\twisted{R}}\left(E,  \twisted{E}\right)$.
We have
$$\Hom_{R}\left(\twisted{R}, R \right) \cong \mathcal{B} \subseteq \prod_{\gamma\in\Gamma} \Hom_{\twisted{R}}\left(E,  \twisted{E}\right) \cong
\prod_{\gamma\in\Gamma} \twisted{R} T$$
where $T$ is the standard Frobenius map on $E$.

\item[(c)] The set $\Gamma$ is finite if and only if $\twisted\mathbb{K}$ is a finite extension of $\mathbb{K}$, in which case $\# \Gamma=1$.
\end{enumerate}
\end{thm}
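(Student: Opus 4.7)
The proof naturally splits along the three parts of the statement.

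For part (a), the strategy is to exploit Kunz's theorem: since $R$ is regular, $\twisted{R}$ is flat as an $R$-module, and the standard Hom–tensor adjunction then forces $\Hom_R(\twisted{R}, E)$ to be injective as an $\twisted{R}$-module. Matlis' structure theorem decomposes this injective into a direct sum of indecomposable injective hulls indexed by primes of $R$; gathering the summands attached to the maximal ideal $m$ into copies of $\twisted{E}$, and the remaining summands (indexed by non-maximal primes) into $H$, yields the stated form. To see that $\Gamma$ is non-empty I would exhibit a concrete non-zero element: the composition $\twisted{R} \twoheadrightarrow \twisted{R}/m = \twisted{\mathbb{K}} \hookrightarrow \twisted{E}$ is annihilated by $m$ and so sits in the socle, forcing at least one $\twisted{E}$ summand to occur.

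For part (b), I would first invoke Lemma~\ref{Lemma: extension of Matlis duality} to rewrite
\[
\Hom_R(\twisted{R}, R) \cong \Hom_R(R^\vee, \twisted{R}^\vee) = \Hom_R(E, \Hom_R(\twisted{R}, E)),
\]
and then substitute the decomposition from (a) on the right. The crucial intermediate step is $\Hom_{\twisted{R}}(E, H) = 0$: for any $\phi: E \to \twisted{E(R/P_\lambda)}$ with $P_\lambda \neq m$ and any $e \in E$ killed by $m^k$, the relation $\phi(re) = r^p\phi(e)$ forces $(m^k)^{[p]}\phi(e) = 0$; but this ideal is $m$-primary, whereas elements of $E(R/P_\lambda)$ have $P_\lambda$-primary annihilators, so $\phi(e) = 0$. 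This reduces the identification to $\mathcal{B} = \Hom_{\twisted{R}}(E, \bigoplus_\Gamma \twisted{E})$. Finally, I would identify $\Hom_{\twisted{R}}(E, \twisted{E}) \cong \twisted{R}T$ by applying Proposition~\ref{Proposition: correspondence between Frobenius maps and matrices} with $\alpha = 1$: an element on the left satisfies $\phi(re) = r^p\phi(e)$, which is exactly the defining condition for a Frobenius map on $E$, and all such maps are of the form $rT$ for a unique $r \in \twisted{R}$.

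For part (c), the count $\#\Gamma$ can be extracted from the socle of $\Hom_R(\twisted{R}, E)$. A direct computation identifies this socle with
\[
\Hom_R(\twisted{R}/m\twisted{R}, E) = \Hom_R(\twisted{\mathbb{K}}, E) \cong \Hom_\mathbb{K}(\twisted{\mathbb{K}}, \mathbb{K}),
\]
a $\mathbb{K}$-vector space of dimension $[\mathbb{K}:\mathbb{K}^p]$. Since the $H$-summands contribute zero to this socle (by the same argument as in (b)), the socle coincides with that of $\bigoplus_\Gamma \twisted{E}$, and matching invariants shows that $\#\Gamma$ is finite iff $[\mathbb{K}:\mathbb{K}^p]$ is finite; a closer comparison of the induced $\twisted{\mathbb{K}}$-structures on both sides then pins down $\#\Gamma = 1$ in the finite case. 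The main obstacle I foresee is precisely this last step: disentangling the ordinary and the twisted $\mathbb{K}$-vector space structures on the socle, and upgrading the finiteness conclusion to the sharp equality $\#\Gamma = 1$, will require careful bookkeeping about which $R$-action is being used at each stage.
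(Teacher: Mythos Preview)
Your outline matches the paper's proof closely: adjunction for injectivity in (a), the vanishing $\Hom(E,H)=0$ plus Lemma~\ref{Lemma: extension of Matlis duality} for (b), and a socle computation for (c). Two small points are worth flagging.

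First, the explicit element you write in (a) is not a map in $\Hom_R(\twisted{R},E)$: the inclusion $\twisted{\mathbb{K}}\hookrightarrow E$ is not $R$-linear unless $\mathbb{K}=\mathbb{F}_p$. What you actually need is any nonzero $\mathbb{K}$-linear map $\twisted{\mathbb{K}}\to\mathbb{K}$ (these exist since $\twisted{\mathbb{K}}$ is a nonzero $\mathbb{K}$-vector space) composed with $\mathbb{K}\hookrightarrow E$; this lands in the socle and shows $\Gamma\neq\emptyset$. The paper sidesteps this by deferring non-emptiness to the computation in (c).

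Second, the obstacle you anticipate in (c) is exactly where the paper is slightly slicker: rather than computing the $\mathbb{K}$-dimension of the socle and then untangling the two actions, the paper computes the $\twisted{\mathbb{K}}$-dimension of $\Hom_{\twisted{\mathbb{K}}}(\twisted{\mathbb{K}},G)$ directly. Since the socle of $\bigoplus_\Gamma \twisted{E}$ is $\bigoplus_\Gamma \twisted{\mathbb{K}}$, this dimension is $\#\Gamma$ on the nose, and the same adjunction identifies it with $\Hom_{\mathbb{K}}(\twisted{\mathbb{K}},\mathbb{K})$. In the finite case one then checks that $\Hom_{\mathbb{K}}(\twisted{\mathbb{K}},\mathbb{K})$ is one-dimensional over $\twisted{\mathbb{K}}$, generated by the projection onto $\mathbb{K}\cdot 1\subset\twisted{\mathbb{K}}$; in the infinite case a cardinality argument ($\dim_{\mathbb{K}}\geq 2^{\#\mathcal{U}}$) rules out finiteness. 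Working with the $\twisted{\mathbb{K}}$-structure from the start avoids the bookkeeping you were worried about.
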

\begin{proof}

The functors $\Hom_{R} \left( - , E\right)=\Hom_{R} \left( - \otimes_{\twisted{R}} \twisted{R}, E\right)$ and
$\Hom_{\twisted{R}} \left( -, \Hom_{R}\left( \twisted{R}, E\right)\right)$ from the category of $\twisted{R}$-modules to itself
are isomorphic by the adjointness of $\Hom$ and $\otimes$, and since $\Hom_{R} \left( - , E\right)$ is an exact functor,
so is $\Hom_{\twisted{R}} \left( -, \Hom_{R}\left( \twisted{R}, E\right)\right)$, thus
$\Hom_{R}\left( \twisted{R}, E\right)$ is an injective $\twisted{R}$-module and hence of the form
$G \oplus H$ where $G$ is a direct sum of copies of $\twisted{E}$ and $H$ is as in the statement of the Theorem.
Write $G=\oplus_{\gamma\in\Gamma} \twisted{E}$. To finish establishing (a) we need only verify that $\Gamma\neq \emptyset$ and we do this below.

Pick any $h\in  \Hom_{R} \left( E, \bigoplus_{\lambda\in\Lambda} \twisted{E(R/P_\lambda)} \right)$.
For any $a\in E$, $h(a)$ can be written as a finite sum $b_{\lambda_1}+ \dots + b_{\lambda_s}$ where
$\lambda_1, \dots, \lambda_s\in\Lambda$ and
$b_{\lambda_1}\in\twisted{E(R/P_{\lambda_1})}, \dots, b_{\lambda_s}\in\twisted{E(R/P_{\lambda_s})}$. Use prime avoidance
to pick a $z\in m \setminus \cup_{i=1}^s P_{\lambda_i}$;
now $z$ and its powers act invertibly on each of $\twisted{E(R/P_{\lambda_1})}, \dots, \twisted{E(R/P_{\lambda_s})}$
while a power of $z$ kills $a$, and so we must have $h(a)=0$. We deduce that
$ \Hom_{R} \left( E,  \bigoplus_{\lambda\in\Lambda} \twisted{E(R/P_\lambda)} \right)=0$ and
\begin{eqnarray*}
 \Hom_{R}\left(E , \Hom_{R}\left( \twisted{R}, E\right)\right)& \cong & \Hom_{R}\left(E, G \oplus \bigoplus_{\lambda\in\Lambda} \twisted{E(R/P_\lambda)} \right)\\
 & \cong & \Hom_{R}\left(E,G \right)  \oplus \Hom_{R} \left( E, \bigoplus_{\lambda\in\Lambda} \twisted{E(R/P_\lambda)} \right) \\
 & \cong &  \Hom_{R}\left(E,G\right) \\
 & \cong & \Hom_{R}\left(E, \oplus_{\gamma\in\Gamma} \twisted{E}\right) \\
 & = & \mathcal{B}.
\end{eqnarray*}
Now $\Hom_{R}\left(E, \twisted{E}\right) $ is the $R$-module of Frobenius maps on $E$ which is isomorphic as an
$\twisted{R}$ module to $\twisted{R} T$ and
we conclude that
$\Hom_{R}\left(E , \Hom_{R}\left( \twisted{R}, E\right)\right) \subseteq \prod_{\gamma\in\Gamma} \twisted{R} T$.

An application of the Matlis dual and Lemma \ref{Lemma: extension of Matlis duality} now gives
$$\Hom_{R}\left(\twisted{R}, R \right)  \cong   \Hom_{R}\left(E , \Hom_{R}\left( \twisted{R}, E\right)\right) $$
and (b) follows.

Write $\mathbb{K}=R/m$ and note that $\twisted{\mathbb{K}}$ is the field extension of $\mathbb{K}$ obtained by adding all $p$th roots of elements in $\mathbb{K}$.
We next compute the cardinality of $\Gamma$ as the $\twisted{\mathbb{K}}$-dimension of $\Hom_{\twisted{\mathbb{K}}}\left( \twisted{\mathbb{K}}, G \right)$.
A similar argument to the one above shows that
$$\Hom_{\twisted{\mathbb{K}}}\left( \twisted{\mathbb{K}},  \bigoplus_{\lambda\in\Lambda} \twisted{E(R/P_\lambda)}  \right) =0$$
hence
$\Hom_{\twisted{\mathbb{K}}}\left( \twisted{\mathbb{K}}, G \right)=\Hom_{\twisted{\mathbb{K}}}\left( \twisted{\mathbb{K}}, \Hom_R\left( \twisted{R}, E\right) \right)$.

We may identify $\Hom_{\twisted{\mathbb{K}}}\left( \twisted{\mathbb{K}}, \Hom_R\left( \twisted{R}, E\right) \right)$ and
$\Hom_{\twisted{R}}\left( \twisted{\mathbb{K}}, \Hom_R\left( \twisted{R}, E\right) \right)$.
Another application of the adjointness of $\Hom$ and $\otimes$ gives
$$\Hom_{\twisted{R}}\left( \twisted{\mathbb{K}}, \Hom_R\left( \twisted{R}, E\right) \right) \cong
\Hom_{R}\left( \twisted{\mathbb{K}} \otimes_{\twisted{R}} \twisted{R}, E \right) \cong \Hom_{R}\left( \twisted{\mathbb{K}}, E \right) .$$

Since $m  \twisted{\mathbb{K}}=0$, we see that the image of any $\phi\in \Hom_{R}\left( \twisted{\mathbb{K}}  , E \right)$
is contained in $\Ann_E m \cong \mathbb{K}$ and we deduce that
$\Hom_{R}\left( \twisted{\mathbb{K}}, E \right)\cong \Hom_{R}\left( \twisted{\mathbb{K}}, \mathbb{K} \right)$.
We can now conclude that the cardinality of $\Gamma$ is the  $\twisted{\mathbb{K}}$-dimension of
$\Hom_{R}\left( \twisted{\mathbb{K}}, \mathbb{K} \right)$. In particular $\Gamma$ cannot be empty and (a) follows.

If $\mathcal{U}$ is a $\mathbb{K}$-basis for $\twisted{\mathbb{K}}$ containing $1\in \twisted{\mathbb{K}}$ ,
\begin{equation}\label{eqn1}
\Hom_{\mathbb{K}}\left( \twisted{\mathbb{K}} , \mathbb{K} \right) \cong
\prod_{b\in \mathcal{U}}  \Hom_{\mathbb{K}}\left( \mathbb{K} b , \mathbb{K} \right)
\end{equation}
and when $\mathcal{U}$ is finite, this is a one-dimensional $\twisted{\mathbb{K}}$-vector space spanned by the projection onto
$\mathbb{K} 1 \subset \twisted{\mathbb{K}}$. If  $\mathcal{U}$ is not finite, the dimension as $\mathbb{K}$-vector space of $(\ref{eqn1})$
is at least $2^{\# \mathcal{U}}$ hence $\Hom_{\mathbb{K}}\left( \twisted{\mathbb{K}} , \mathbb{K} \right)$ cannot be a finite-dimensional
$\twisted{\mathbb{K}}$-vector space.
\end{proof}

Our next result is to establish a connection between submodules of $R^n$ compatible with a given $B \in \Hom_R(F_*R^n, R^n)$
and submodules of $E^n$ fixed under a sequence of Frobenius actions determined by $B$.
Note that the previous theorem allows us to view elements of $\Hom_R(F_*R^n, R^n)\cong \Hom_R(F_*R, R)^{n\times n}=\mathcal{B}^{n\times n}$
as elements in $\prod_{\gamma\in\Gamma} \twisted{R}^{n\times n} T$, i.e.,
as sequences $(B_\gamma T)_{\gamma \in \Gamma}$ where each
$B_\gamma$ is an $n\times n$ matrix with entries in $F_*R$ and $T$ is the natural Frobenius action on $E^n$.

\begin{thm}\label{Theorem: Frobenius and compatability 1}
Let $G=\oplus_{\gamma\in\Gamma} \twisted{E}$ and $\mathcal{B}$ be as in Theorem \ref{Theorem: Freeness}.
Let $B\in \Hom_{R}\left(\twisted{R}^n, R^n \right)$ be represented by $(B_\gamma T)_{\gamma \in \Gamma}\in \mathcal{B}^{n\times n}$.
For all $\gamma\in \Gamma$ consider $E^n$ as an $R[\Theta_\gamma; f]$-module with $\Theta_\gamma v= B_\gamma^t T v$ for all $v\in E^n$.
Let $V$ be an $R$-submodule of $R^n$ and fix a matrix $A$ whose columns generate $V$.
If $B (\twisted{V})\subseteq V$, then
$\Ann_{E^n} A^t$ is a $R[\Theta_\gamma; f]$ submodule of $E^n$ for all $\gamma\in \Gamma$.
\end{thm}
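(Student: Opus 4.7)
The strategy is to invoke Proposition \ref{Proposition: correspondence between Frobenius maps and matrices}, which says that $\Ann_{E^n}(A^t)$ is stable under $\Theta_\gamma = B_\gamma^t T$ if and only if $\Image(B_\gamma A) \subseteq \Image(A^{[p]})$. So for each $\gamma\in\Gamma$ the task reduces to extracting this matrix containment from the single hypothesis $B(\twisted{V})\subseteq V$.

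The first step is to translate the hypothesis through Matlis duality. The condition $B(\twisted{V})\subseteq V$ is equivalent to saying that $B$ descends to an $R$-linear map $\bar B\colon \twisted{R}^n/\twisted{V} \to R^n/V$. Dualizing yields $\bar B^\vee\colon (R^n/V)^\vee \to (\twisted{R}^n/\twisted{V})^\vee$, whose source is exactly $\Ann_{E^n}(A^t)$ and whose target embeds into $(\twisted{R}^n)^\vee = \Hom_R(\twisted{R}, E)^n = (G\oplus H)^n$ by Theorem \ref{Theorem: Freeness}(a). Since $\Hom_R(E, H) = 0$ (as established in the proof of Theorem \ref{Theorem: Freeness}) and $\Ann_{E^n}(A^t)$ sits inside $E^n$, injectivity of $H$ forces the image of $\bar B^\vee$ to lie entirely in $G^n = \oplus_{\gamma\in\Gamma}\twisted{E}^n$.

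Next I would project onto the $\gamma$-th $\twisted{E}^n$-summand. By the identification in Theorem \ref{Theorem: Freeness}(b) of $\mathcal{B}$ with $\Hom_{\twisted{R}}(E, G) \subseteq \prod_\gamma \twisted{R} T$, extended to $n\times n$ matrices, this projection of $\bar B^\vee$ coincides with the restriction of $B_\gamma^t T\colon E^n \to \twisted{E}^n$ to $\Ann_{E^n}(A^t)$. Its image must therefore lie in the intersection $(\twisted{R}^n/\twisted{V})^\vee \cap \twisted{E}^n$ inside $\Hom_R(\twisted{R}^n, E)$ which, viewed as a subset of $\twisted{E}^n = E^n$ as abelian groups, should be identified with $\Ann_{E^n}(A^t)$ itself. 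Once this identification is in hand, $B_\gamma^t T$ preserves $\Ann_{E^n}(A^t)$, and Proposition \ref{Proposition: correspondence between Frobenius maps and matrices} delivers $\Image(B_\gamma A) \subseteq \Image(A^{[p]})$.

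The main obstacle is precisely this last identification: verifying that under the $\gamma$-th inclusion $\twisted{E}^n\hookrightarrow\Hom_R(\twisted{R}^n,E)$ coming from the splitting of $\Hom_R(\twisted{R},E)$, an element $e\in\twisted{E}^n$ vanishes on $\twisted{V}$ exactly when $A^t e = 0$. This is where the abstract decomposition supplied by Theorem \ref{Theorem: Freeness} has to be meshed with the concrete matrix description of the Frobenius actions $B_\gamma^t T$ on $E^n$, and is the heart of the argument.
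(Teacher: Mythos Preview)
Your approach is essentially the paper's: dualize the compatibility diagram, use the decomposition $\Hom_R(\twisted{R},E)=G\oplus H$ together with $\Hom_R(E,H)=0$ to land in $G^n=\oplus_\gamma \twisted{E}^n$, and project onto each summand to see that $B_\gamma^t T$ carries $\Ann_{E^n}A^t$ into itself. The obstacle you flag is handled in the paper by computing $(\twisted{R}^n/\twisted{V})^\vee=\Ann_{\Hom_R(\twisted{R}^n,E)}\twisted{A}^t$ from the presentation and noting that $\twisted{A}^t$ is $\twisted{R}$-linear, hence respects the direct sum decomposition, so its kernel meets the $\gamma$-th $\twisted{E}^n$ in $\Ann_{\twisted{E}^n}\twisted{A}^t=\Ann_{E^n}A^t$; your framing via Proposition~\ref{Proposition: correspondence between Frobenius maps and matrices} is harmless but unnecessary, since stability of $\Ann_{E^n}A^t$ under $B_\gamma^t T$ is already the conclusion.
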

\begin{proof}

Apply the Matlis dual to the commutative diagram
\begin{equation*}\label{CD14}
\xymatrix{
0 \ar@{>}[r]^{}  &
\twisted{V} \ar@{>}[r]^{} \ar@{>}[d]^{B} &
\twisted{R}^n \ar@{>}[r]^{} \ar@{>}[d]^{B}               &
\twisted{R}^n/\twisted{A} \ar@{>}[r]^{}  \ar@{>}[d]^{\overline{B}} &
0  \\
0 \ar@{>}[r]^{} &
V\ar@{>}[r]^{} &
R^n \ar@{>}[r]^{} &
R^n/V \ar@{>}[r]^{} &
0
}
\end{equation*}
where the rightmost vertical map is induced by the middle map to obtain
\begin{equation*}\label{CD15}
\xymatrix{
0 \ar@{>}[r]^{} &
\left(R^n/V\right)^\vee  \ar@{>}[r]^{} \ar@{>}[d]^{\overline{B}^\vee} &
E^n \ar@{>}[d]^{B^\vee} \\
0 \ar@{>}[r]^{}  &
\left(\twisted{R}^n/\twisted{V}\right)^\vee \ar@{>}[r]^{} &
\Hom_R\left( \twisted{R}^n, E \right)
}
\end{equation*}
Note that the previous theorem shows that
$$\Hom_R\left(E^n, \Hom_R(\twisted{R}^n,E)\right)\cong
\Hom_R\left(E^n, \oplus_{\gamma\in\Gamma} \twisted{E}^n\right). $$
Also note that under this isomorphism
$B^\vee\in \Hom_R\left(E, \oplus_{\gamma\in\Gamma} \twisted{E}\right)^{n\times n}$ is given by
$(B^t_\gamma)_{\gamma\in\Gamma}$.
and that the image of $B^\vee$ is contained in $\oplus_{\gamma\in\Gamma} \twisted{E}^n$

Using the presentation
$ \twisted{R}^m \xrightarrow[]{\twisted{A}} \twisted{R}^n \rightarrow  \twisted{R}^n/ \twisted{V} \rightarrow 0$
we obtain the exact sequence
$$0 \rightarrow \left(\twisted{R}^n/\twisted{V}\right)^\vee \rightarrow
\Hom_R\left( \twisted{R}^n, E \right)  \xrightarrow[]{\twisted{A}^t} \Hom_R\left( \twisted{R}^m, E \right) $$
thus
$$ \left(\twisted{R}^n/\twisted{V}\right)^\vee = \Ann_{\Hom(\twisted{R}^n,E)} \twisted{A}^t .$$

We now obtain the commutative diagram
\begin{equation*}\label{CD16}
\xymatrix{
0 \ar@{>}[r]^{} &
\Ann_{E^n} A^t  \ar@{>}[r]^{} \ar@{>}[d]^{(B^t_\gamma T)_{\gamma\in \Gamma}} &
E^n  \ar@{>}[d]^{(B^t_\gamma T)_{\gamma\in \Gamma} } \\
0 \ar@{>}[r]^{}  &
\oplus_{\gamma\in\Gamma} \Ann_{\twisted{E}^n} \twisted{A}^t \ar@{>}[r]^{}  &
\oplus_{\gamma\in\Gamma} \twisted{E}^n \\
}
\end{equation*}
and we deduce that $\Ann_{E^n} A^t$ is a $R[\Theta_\gamma; f]$-module for all $\gamma\in \Gamma$.
\end{proof}

\begin{thm}\label{Theorem: finitely many submodules}
Let $M$ be an $R[\Theta; f]$-module with no nilpotents and assume $M$ is an Artinian $R$-module.
Then $M$ has finitely many $R[\Theta; f]$-submodules.
(Cf.~Corollary 4.18 in \cite{Blickle-Bockle}.)
\end{thm}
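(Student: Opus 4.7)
The plan is to use Lyubeznik's functor $\mathcal{H}$ to translate this into a statement about $F$-submodules of the $F$-finite $F$-module $\mathcal{H}(M)$, to which Hochster's finiteness theorem (cited just before Theorem~\ref{Theorem: surjectivity of Lyubeznik's functor}) can be applied. To each $R[\Theta;f]$-submodule $N\subseteq M$ I associate the $F$-submodule $\mathcal{K}(N):=\mathcal{H}(M/N)\subseteq\mathcal{H}(M)$, well-defined by applying $\mathcal{H}$ (exact, in the direction-reversing sense used throughout Section~\ref{Section: Morphisms in C}) to the short exact sequence $0\to N\to M\to M/N\to 0$. Since $\mathcal{H}(M)$ has only finitely many $F$-submodules, proving that $N\mapsto\mathcal{K}(N)$ is injective suffices.

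For injectivity I would first treat the nested case $N_1\subseteq N_2$: if $\mathcal{K}(N_1)=\mathcal{K}(N_2)$, applying $\mathcal{H}$ to $0\to N_2/N_1\to M/N_1\to M/N_2\to 0$ yields $\mathcal{H}(N_2/N_1)\cong\mathcal{K}(N_1)/\mathcal{K}(N_2)=0$, which by the characterization established during the proof of Theorem~\ref{Theorem: injectivity of Lyubeznik's functor} gives $\Theta^e N_2\subseteq N_1$ for some $e\geq 0$. The general case is then reduced to the nested one using the Mayer--Vietoris short exact sequence
\begin{equation*}
0\to M/(N_1\cap N_2)\to M/N_1\oplus M/N_2\to M/(N_1+N_2)\to 0
\end{equation*}
under $\mathcal{H}$ together with the nested case applied to the pair $N_1\cap N_2\subseteq N_1+N_2$.

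The main obstacle is the final step of the nested case: from $\Theta^e N_2\subseteq N_1$ with $N_1\subseteq N_2$, to conclude $N_1=N_2$ using only $\Nil(M)=0$. Since $\Nil(M)=0$ is equivalent to the injectivity of each iterate $\Theta^e$ on $M$, I would pursue this through the Hartshorne--Speiser--Lyubeznik stabilization recalled in Section~\ref{Section: Morphisms in C}: the decreasing chain $R\Theta^j N_i$ stabilizes to $N_i^*$ for $j$ large, the hypothesis $\Theta^e N_2\subseteq N_1$ propagates to $N_2^*\subseteq N_1^*$, while $N_1\subseteq N_2$ gives the reverse, so $N_1^*=N_2^*$; it then remains to show that $\Nil(M)=0$ forces $N=N^*$ for every $R[\Theta;f]$-submodule $N\subseteq M$. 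This last implication --- equivalently, that $\Nil(M)=0$ prevents the existence of a nonzero $\Theta$-trivial quotient of any submodule --- is what I expect to be the most delicate point; I would approach it by Matlis-dualising the condition to a statement about injectivity of the structure morphism $\beta:M^\vee\to F(M^\vee)$ on each $\beta$-compatible piece of $M^\vee$ and leveraging the faithful flatness of the Frobenius functor over the regular ring $R$.
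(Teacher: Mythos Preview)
Your overall architecture is exactly that of the paper: apply $\mathcal{H}$, use Hochster's theorem that an $F$-finite $F$-module has only finitely many $F$-submodules, and deduce finiteness downstairs from injectivity of $N\mapsto \mathcal{K}(N)=\mathcal{H}(M/N)$. The difference is that the paper does not prove this injectivity at all: it simply invokes \cite[Theorem~4.2]{Lyubeznik}, which already gives (for $M$ with $\Nil(M)=0$) an injection from the $R[\Theta;f]$-submodules of $M$ to the $F$-finite quotients of $\mathcal{H}(M)$, then cites \cite[Theorem~2.8]{Lyubeznik} to pass to $F$-submodules and \cite[Theorem~3.2]{Lyubeznik} together with \cite[Corollary~5.2(b)]{Hochster} for finiteness. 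Your Mayer--Vietoris reduction to the nested case is correct, and within the nested case your deduction that $\mathcal{K}(N_1)=\mathcal{K}(N_2)$ forces $N_2/N_1$ to be nilpotent, and that this gives $N_1^*=N_2^*$, is also correct.

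The point where your argument is genuinely incomplete is precisely the implication you flag: that $\Nil(M)=0$ forces $N=N^*$ for every $R[\Theta;f]$-submodule $N\subseteq M$. Note that this implication is \emph{equivalent} to the injectivity you are after (if some $N\neq N^*$ then $N^*\subsetneq N$ with $N/N^*$ nilpotent, so $\mathcal{K}(N)=\mathcal{K}(N^*)$), so you have correctly isolated the entire content of Lyubeznik's Theorem~4.2 as the missing step. However, the heuristic you propose---that injectivity of $\Theta$ on $M$ should, via Matlis duality and faithful flatness of $F$, yield $R\Theta N=N$ on every submodule---conflates two genuinely different conditions: $\Nil(M)=0$ is an \emph{injectivity} statement about $\Theta$, while $N=N^*$ is a \emph{surjectivity} statement (that $\Theta$-images generate). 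Dually, $\Nil(M)=0$ does not translate into injectivity of $\beta$ on arbitrary $\beta$-compatible submodules of $M^\vee$ in the way you suggest, and faithful flatness of $F$ alone does not bridge the gap. Lyubeznik's proof proceeds instead by constructing an explicit inverse to $N\mapsto\mathcal{K}(N)$ at the level of roots; if you want to avoid citing his theorem, you would need to reproduce that construction rather than argue via $N^*$.
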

\begin{proof}
Write $\mathcal{M}=\mathcal{H}(M)$.
In view of \cite[Theorem 4.2]{Lyubeznik}, there is an injection between the set
of inclusions of $R[\Theta; f]$-submodules $N\subseteq M$ and the set of surjections of $F$-finite $F$-modules
$\mathcal{M} \rightarrow \mathcal{N}$ hence it is enough to show that there are finitely many such surjections.
By \cite[Theorem 2.8]{Lyubeznik} the kernels of these surjections are  $F$-finite $F$-submodules of $\mathcal{M}$ hence it is enough to show that
$\mathcal{M}$  has finitely many submodules.

All objects in the category of $F$-finite $F$-modules have finite length (cf.~\cite[Theorem 3.2]{Lyubeznik})
and the theorem now follows from \cite[Corollary 5.2(b)]{Hochster}.
\end{proof}

\begin{cor}\label{Corollary: finitely many submodules}
Let $B\in \Hom_R(\twisted{R}^n,R)$ be represented by
$(B_\gamma^t T)_{\gamma\in \Gamma} \in \mathcal{B}^{n\times n}$,
and assume that
$B_\gamma^t T: E^n \rightarrow  E^n$
is injective for some $\gamma\in \Gamma$.
Then there are finitely many $B$-compatible submodules of $\twisted{R}^n$.
In particular this holds when $n=1$ and $(B_\gamma T)_{\gamma\in \Gamma}: E \rightarrow  \oplus_{\gamma\in\Gamma} E$ is injective.
\end{cor}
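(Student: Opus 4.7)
The plan is to translate the finiteness question through Matlis duality: a $B$-compatible submodule of $\twisted{R}^n$ would correspond to an $R[\Theta;f]$-submodule of $E^n$ for a single Frobenius $\Theta$ chosen to exploit the injectivity hypothesis, after which Theorem \ref{Theorem: finitely many submodules} applies directly.

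In detail, I would first pick $\gamma_0 \in \Gamma$ for which $\Theta := B_{\gamma_0}^t T : E^n \to E^n$ is injective (which is exactly the hypothesis) and endow $E^n$ with the corresponding $R[\Theta;f]$-module structure. Injectivity of $\Theta$ immediately yields $\Nil(E^n) = 0$: if $\Theta^e v = 0$ then iterated injectivity forces $v = 0$. Since $E^n$ is Artinian over $R$, Theorem \ref{Theorem: finitely many submodules} gives that $E^n$ has only finitely many $R[\Theta;f]$-submodules.

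Next, for each $B$-compatible submodule $V \subseteq \twisted{R}^n$ I would pick a matrix $A$ whose columns generate $V$ (regarded as a submodule of $R^n$) and invoke Theorem \ref{Theorem: Frobenius and compatability 1} to conclude that $\Ann_{E^n} A^t$ is an $R[\Theta;f]$-submodule of $E^n$. Matlis duality identifies the assignment $V \mapsto \Ann_{E^n} A^t = (R^n/V)^\vee$ with an order-reversing bijection between $R$-submodules of $R^n$ and $R$-submodules of $E^n$; in particular it is injective. Composing produces an injection from the set of $B$-compatible submodules into the finite set of $R[\Theta;f]$-submodules of $E^n$, which gives the desired conclusion.

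For the ``in particular'' $n=1$ case, the combined injectivity of $(B_\gamma T)_\gamma : E \to \oplus_\gamma E$ forces at least one $B_\gamma$ to be a unit in $R$: the socle element $1 \in E$ satisfies $uT(1) = u \cdot 1 = 0$ for every $u \in m$, so if no $B_\gamma$ were a unit the common kernel $\bigcap_\gamma \ker(B_\gamma T)$ would contain this nonzero socle element. A unit $B_\gamma$ makes $B_\gamma T$ injective (since $T$ itself is injective on $E$), recovering the main hypothesis. The principal obstacle in the whole argument is the invocation of Theorem \ref{Theorem: Frobenius and compatability 1}, which packages $B$-compatibility as $\Theta$-stability on the Matlis dual side; once that is combined with the standard Matlis bijection on submodule lattices, the finiteness follows at once from Theorem \ref{Theorem: finitely many submodules}.
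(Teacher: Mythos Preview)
Your main argument for the first assertion is correct and follows the paper's approach: invoke Theorem~\ref{Theorem: Frobenius and compatability 1} to pass from $B$-compatible submodules of $R^n$ to $R[\Theta;f]$-submodules of $E^n$ via Matlis duality, then use injectivity of $\Theta=B_{\gamma_0}^t T$ to get $\Nil(E^n)=0$ and conclude finiteness. The only difference is that you appeal to the paper's own Theorem~\ref{Theorem: finitely many submodules}, whereas the paper cites the external results \cite[Theorem~3.10]{Sharp} and \cite[Theorem~3.6]{Enescu-Hochster} directly; either works.

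Your argument for the $n=1$ case, however, contains a genuine error. You assert that the socle element ``$1\in E$'' satisfies $uT(1)=u\cdot 1$, i.e.\ that $T$ fixes the socle. In the paper's explicit model $E=\mathbb{K}[\![x_1^-,\dots,x_d^-]\!]$ the socle element is $x_1^{-1}\cdots x_d^{-1}$, and $T$ sends it to $x_1^{-p}\cdots x_d^{-p}$, which is \emph{not} killed by $m$. In fact the conclusion you aim for, that some $B_\gamma$ must be a unit, is too strong and is false: take $R=\mathbb{K}[\![x]\!]$ with $\mathbb{K}$ perfect (so $\#\Gamma=1$) and $B_\gamma=x^{p-1}$; then $B_\gamma T(x^{-n})=x^{-(p(n-1)+1)}\neq 0$ for all $n\geq 1$, so $B_\gamma T$ is injective while $B_\gamma$ is not a unit. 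The paper's argument is more careful: it writes each kernel as $Z_\gamma=\ker(B_\gamma T)=\Ann_E C_\gamma$ for an ideal $C_\gamma\subseteq R$, observes that $\bigcap_\gamma Z_\gamma=\Ann_E\bigl(\sum_\gamma C_\gamma\bigr)$, and notes that if no $B_\gamma T$ is injective then every $C_\gamma\subseteq m$, hence $\sum_\gamma C_\gamma\subseteq m$ and the common kernel is nonzero, contradicting the hypothesis. This yields only that some $B_\gamma T$ is injective, which is exactly what is needed.
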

\begin{proof}
Let $V$ be an $R$-submodule of $R^n$ and fix a matrix $A$ whose columns generate $V$.
Theorem \ref{Theorem: Frobenius and compatability 1} implies that if $\twisted{V} \subseteq \twisted{R}^n$ is
$B$-compatible then
for all $\gamma\in \Gamma$,
$\Ann_{E^n} A^t \subseteq E^n$ is an $R[\Theta; f]$-submodule of $E^n$ with the Frobenius action given by
$B_\gamma^t T$.
If there exists a $\gamma\in \Gamma$
such that $B_\gamma^t T$ is injective, then \cite[Theorem 3.10]{Sharp} or \cite[Theorem 3.6]{Enescu-Hochster} imply that there must
finitely many $R[B_\gamma^t T; f]$-submodules of $E^n$ and hence also finitely many $B$-compatible submodules of $R^n$.

Assume now that $n=1$.
For all $\gamma\in\Gamma$ write $Z_\gamma=\{ v\in E \,|\, B_\gamma T v=0 \}$
and let $C_\gamma\subseteq R$ be the ideal for which
$Z_\gamma=\Ann_{E} C_\gamma$.
If $C_\gamma\subseteq m R$ for all $\gamma\in \Gamma$, then $C=\sum_{\gamma\in\Gamma}  C_\gamma \neq R$,
and for any non-zero $v\in \Ann_{E} C\neq 0$, we have $B_\gamma T v=0$ for all $\gamma\in\Gamma$.
We conclude that there exists a $\gamma\in\Gamma$ such that, $C_\gamma=R$, i.e.,
that the Frobenius map $B_\gamma T$ on $E$ is injective, and the last assertion of the corollary follows.
\end{proof}

\section*{Acknowledgements}
I thank Karl Schwede for our pleasant discussions on Frobenius splittings and in
particular for showing me a variant of results in section \ref{Section: Applications to Frobenius splittings}
in the $F$-finite case.
I thank the anonymous referee for useful suggestions which improved the original version of this paper.


\begin{thebibliography}{EH}

\bibitem[BB]{Blickle-Bockle}
M.~Blickle and G.~B\"ockle.
\emph{Cartier Modules: finiteness results.}
Preprint, oai:arXiv.org:0909.2531.


\bibitem[BK]{Brion-Kumar}
M.~Brion and S.~Kumar.
\emph{Frobenius splitting methods in geometry and representation theory.}
Progress in Mathematics, {\bf 231}, Birkh\"auser Boston, Inc., Boston, MA, 2005.

\bibitem[BS]{Brodmann-Sharp}
M.~P.~Brodmann and  R.~Y.~Sharp.
\emph{Local cohomology: an algebraic introduction with geometric applications.}
Cambridge Studies in Advanced Mathematics, {\bf 60}, Cambridge University Press, Cambridge, 1998.

\bibitem[EH]{Enescu-Hochster}
F.~Enescu and M.~Hochster.
\emph{The Frobenius structure of local cohomology.}
Algebra Number Theory {\bf 2} (2008), no.~7, 721–-754.

\bibitem[F]{Fedder}
R.~Fedder.
\emph{$F$-purity and rational singularity.}
Transactions of the AMS, {\bf 278} (1983), no.~2, pp.~461--480.


\bibitem[HS]{Hartshorne-Speiser} R.~Hartshorne and R.~Speiser.
\emph{Local cohomological dimension in characteristic $p$,} Annals of Mathematics
\textbf{105} (1977), pp.~45--79.


\bibitem[H]{Hochster}
M.~Hochster.
\emph{Some finiteness properties of Lyubeznik's $\mathcal{F}$-modules.}
Algebra, geometry and their interactions, pp.~119--127,
Contemporary Mathematics, {\bf 448}, American Mathematical Society, Providence, RI, 2007.


\bibitem[K1]{Katzman1}
M.~Katzman.
\emph{Parameter test ideals of Cohen Macaulay rings.}
Compositio Mathematica, {\bf 144} (2008), pp.~933--948.

\bibitem[K2]{Katzman2}
M.~Katzman.
\emph{Frobenius maps on injective hulls and their applications to tight closure.}
Journal of the LMS,  {\bf 81} (2010), no.~3, 589–-607.

\bibitem[L]{Lyubeznik}
G.~Lyubeznik.
\emph{$F$-modules: applications to local cohomology and $D$-modules in characteristic $p>0$.}
J.~Reine Angew.~Math.~{\bf 491} (1997), pp.~65--130.

\bibitem[S]{Sharp}
R.~Y.~Sharp.
\emph{Graded annihilators of modules over the Frobenius skew polynomial ring, and tight closure.}
Transactions of the AMS {\bf 359}  (2007),  no.~9, 4237--4258.

\end{thebibliography}
\end{document}